\newcommand{\Aut}       {\operatorname{Aut}}
\newcommand{\Map}       {\operatorname{Map}}
\newcommand{\Perm}      {\operatorname{Perm}}
\newcommand{\dv}        {\operatorname{div}}
\newcommand{\spec}      {\operatorname{spec}}
\newcommand{\aff}       {\mathbb{A}}
\newcommand{\proj}      {\mathbb{P}}
\newcommand{\F}         {{\mathbb{F}}}
\newcommand{\Z}         {{\mathbb{Z}}}
\newcommand{\al}        {\alpha}
\newcommand{\bt}        {\beta} 
\newcommand{\gm}        {\gamma}
\newcommand{\lm}        {\lambda}
\newcommand{\lmb}       {\overline{\lambda}}
\newcommand{\sg}        {\sigma}
\newcommand{\om}        {\omega}
\newcommand{\ob}        {{\overline{\omega}}}
\newcommand{\Gm}        {\Gamma}
\newcommand{\Dl}        {\Delta}
\newcommand{\Om}        {\Omega}
\newcommand{\mtx}       {{\mu^\times_3}}
\newcommand{\hE}        {\widehat{E}}
\newcommand{\sm}        {\setminus}
\newcommand{\third}     {{\scriptstyle \frac{1}{3}}}
\newcommand{\tm}        {\times}
\newcommand{\st}        {\;|\;}
\newcommand{\xra}       {\xrightarrow}
\newcommand{\mxi}       {\mathfrak{m}}
\newcommand{\bsm}       {\left(\begin{smallmatrix}}
\newcommand{\esm}       {\end{smallmatrix}\right)}
\newcommand{\CO}       {\mathcal{O}}
\renewcommand{\:}{\colon}
\newtheorem{theorem}{Theorem}
\newtheorem{lemma}[theorem]{Lemma}
\newtheorem{proposition}[theorem]{Proposition}
\newtheorem{corollary}[theorem]{Corollary}
\theoremstyle{definition}
\newtheorem{remark}[theorem]{Remark}
\newenvironment{diag}{
 \renewcommand{\typeout}[1]{}
 \begin{displaymath}
 \begin{diagram}}{
 \end{diagram}
 \end{displaymath}} 
\begin{document}
\title{Level three structures}
\author{N.~P.~Strickland}
\date{\today}
\bibliographystyle{abbrv}

\maketitle 

\section{Introduction}

The theory of elliptic curves and their level structures is important
in stable homotopy theory.  In this note we work out the details of a
certain fragment of the theory where it is possible to be very
explicit.  This is intended as a convenient reference for people
working on elliptic cohomology.  Many of the facts are doubtless
familiar to algebraic geometers.

We will define a scheme $S$ over $\spec(\Z[\third])$, an elliptic curve
$C$ over $S$, and an injective homomorphism $\phi$ from $\F_3^2$ to
the group of sections of $C$.  We will then study the automorphisms of
$S$ and the automorphisms of $C$ that cover them.  The conclusion will
be that $G:=GL_2(\F_3)$ acts on $C$ and $S$ in a way that is compatible
with its evident action on $\F_3^2$.

Next, we will show that $C/S$ is the universal example of an elliptic
curve over a $\Z[\third]$-scheme equipped with a level three
structure.  This could be used to give an alternative construction of
the action of $G$.

We will observe that $C$ is Landweber-exact, and deduce that there is an
elliptic spectrum $E$ attached to $C$, with a compatible action of
$G$.  The spectrum $\hE=L_{K(2)}E$ (at the prime $2$) is a version of
$E_2$; it need not be multiplicatively isomorphic to the more usual
$p$-typical version until we make some algebraic extensions, but I
do not think that that is important.  The whole Hopkins-Miller-Goerss
technology should construct a model of $\hE$ with a rigid action of
$G$, and $EO_2=\hE^{hG}$.

\section{Definition of the curve $C$}

In this note, all schemes are implicitly assumed to be schemes over
$\spec(\Z[\third])$.  We write $\aff^1$ for the affine line, $\mu_3$
for the scheme of cube roots of unity, and $\mtx$ for the subscheme of
primitive cube roots.  We also put 
\begin{align*}
 S_0 &= \aff^1\sm\mu_3 = \proj^1\sm(\mu_3\cup\{\infty\}) \\
 S   &= \mu_3\tm S_0.
\end{align*}
The corresponding rings are
\begin{align*}
 \CO_{\aff^1} &= \Z[\third,\nu] \\
 \CO_{\mu_3}  &= \Z[\third,\om]/(\om^3-1) \\
 \CO_{\mtx}   &= \Z[\third,\om]/(1+\om+\om^2)\\
 \CO_{S_0}    &= \Z[\third,\nu,(\nu^3-1)^{-1}] \\
 \CO_S        &= \Z[\third,\om,\nu,(\nu^3-1)^{-1}]/(1+\om+\om^2).
\end{align*}
We will also use the notation $A=\CO_\mtx$, $B_0=\CO_{S_0}$ and
$B=\CO_S$, and put $\ob=1/\om=\om^2=-1-\om\in A^\tm$.  Note that in $B$
we have $\nu^3-1=(\nu-1)(\nu-\om)(\nu-\ob)$, so that $\nu-1$,
$\nu-\om$ and $\nu-\ob$ are invertible in $B$.

We next define a plane projective cubic curve $C_0$ over $S_0$ by the
homogeneous equation
\[ y^2 z + (\nu^3 - 1) y z^2 + 3 \nu x y z - x^3 = 0. \]
The intersection with $\aff^2\subset\proj^2$ is given by the
inhomogeneous equation
\[ y^2 + (\nu^3 - 1) y + 3 \nu x y - x^3 = 0. \]
We also define $C=C_0\tm_{S_0}S$.

The standard invariants of the plane curve $C$ (with notation as in
Deligne's Formulaire~\cite{de:cef}) are as follows.  Firstly, the
defining equation can be written in the form
\[ y^2 + a_1 x y + a_3 y = x^3 + a_2 x^2 + a_4 x + a_6, \]
where
\begin{align*}
 a_2 &= a_4 = a_6 = 0   \\
 a_1 &= 3\nu            \\
 a_3 &= \nu^3-1.
\end{align*}
The following quantities are defined in terms of the $a_k$ as in the
Formulaire. 
\begin{align*}
 c_4 &= 9\nu(\nu^3+8)                         \\
 c_6 &= 27(\nu^6-20\nu^3-8)                   \\
 \Dl &= 27(\nu^3-1)^3                         \\
 j   &= 27\nu^3(\nu^3+8)^3/(\nu^3-1)^3 
\end{align*}
In particular, this shows that $\Dl$ is a unit in $\CO_S$ so $C$ is an
elliptic curve.

\section{Automorphisms of $S$}

Write $\Om=\{1,\om,\ob,\infty\}$, and let $\Perm(\Om)$ denote the
group of permutations of this set.
\begin{proposition}\label{prop-Aut-S}
 There is a natural isomorphism $\Aut(S)\xra{}\Perm(\Om)$.
\end{proposition}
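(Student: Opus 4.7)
The plan is to use the identification $S \cong \proj^1_A \sm \Omega$, where $\Omega = \{1, \om, \ob, \infty\}$ is viewed as four disjoint sections of $\proj^1_A \to \spec A$. I will show that every $\Z[\third]$-automorphism of $S$ extends uniquely to an automorphism of $\proj^1_A$ permuting these sections, and verify that the resulting homomorphism $\Aut(S) \to \Perm(\Omega)$ is a bijection.

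First I would check that $A$ is intrinsic to $B$ as the integral closure of $\Z[\third]$, so every $\sg \in \Aut(B)$ restricts to an element of the two-element group $\Aut_{\Z[\third]}(A) = \{1, \tau\}$, whose nontrivial element $\tau$ swaps $\om$ and $\ob$. A splitting is provided by the automorphism $\tilde\tau$ of $B$ defined by $\om \mapsto \ob$, $\nu \mapsto \nu$, so $\Aut(B) = \Aut_A(B) \rtimes \langle \tilde\tau \rangle$. Since $\proj^1_A$ is smooth and proper of relative dimension one over $\spec A$, each $A$-linear $\sg \in \Aut_A(B)$ extends uniquely to an $A$-automorphism of $\proj^1_A$; because $A$ is a PID, this is a Möbius transformation, necessarily permuting $\Omega$ setwise. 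The resulting map $\Aut(S) \to \Perm(\Omega)$ sends $\tilde\tau$ to the transposition $(\om\,\ob)$.

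The computational heart is to identify the subgroup of Möbius transformations that stabilise $\Omega$. A direct computation gives the cross-ratio $(1, \om; \ob, \infty) = -\om$, a primitive sixth root of unity, placing $\Omega$ in equianharmonic position; the stabiliser is therefore the tetrahedral group, embedded as $A_4 \subset \Perm(\Omega) = S_4$. Explicit generators are $\nu \mapsto \om\nu$ (a three-cycle on $\{1, \om, \ob\}$) and $\nu \mapsto (\nu + 2)/(\nu - 1)$ (the double transposition $(1\,\infty)(\om\,\ob)$); one verifies directly that no further Möbius map preserves $\Omega$. Combined with the odd transposition contributed by $\tilde\tau$, the image exhausts all of $S_4$, proving surjectivity. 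Injectivity is immediate, since a Möbius transformation with four fixed points is the identity and no element of $\Aut_A(B)\cdot\tilde\tau$ can fix both $\om$ and $\ob$. The main obstacle is verifying that the Möbius stabiliser of $\Omega$ has exactly twelve elements; this reduces, via the equianharmonic cross-ratio, to enumerating the twelve explicit generators above.
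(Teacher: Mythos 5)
Your overall architecture --- compactify $S$ to $\proj^1_A$, use the equianharmonic cross-ratio $(1,\om;\ob,\infty)=-\om$ to cut the M\"obius stabiliser of $\Om$ down to $A_4$, and adjoin the Galois transposition $(\om\;\ob)$ to get all of $S_4$ --- is genuinely different from the paper's argument and could be made to work, but it has a real gap at its load-bearing step: the claim that every $A$-linear automorphism of $B$ ``extends uniquely to an $A$-automorphism of $\proj^1_A$'' because $\proj^1_A$ is smooth and proper. Properness does not give this. Here $\proj^1_A$ is a two-dimensional regular scheme, and a morphism from the open subscheme $S=\proj^1_A\sm\Om$ to $\proj^1_A$ extends a priori only outside a closed set of codimension two; one must still rule out indeterminacy at finitely many closed points and show that the extension is an automorphism carrying $\Om$ into $\Om$. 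Over a field this is the standard uniqueness of the smooth projective model of a curve; over the Dedekind ring $A$ it needs an actual argument (e.g.\ extend on the generic fibre, then check the resulting M\"obius transformation has unit determinant at every prime of $A$ because $\sg$ induces automorphisms of the fibres). The paper's proof is precisely the elementary substitute for this step: it classifies the discrete valuations on $B^\tm$ that are trivial on $A$, shows there are exactly four of them (the orders of vanishing at $1,\om,\ob,\infty$), and thereby recovers the boundary $\Om$ intrinsically from the ring $B$, using only that $B^\tm/A^\tm$ is free on $\{\nu-1,\nu-\om,\nu-\ob\}$ and that the differences of these generators lie in $A^\tm$. If you keep the geometric route you must either supply the extension argument or replace it by something of this kind.

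Two smaller points. First, your injectivity step for the twisted coset is misstated: elements of $\Aut_A(B)\cdot\tilde\tau$ \emph{can} fix both $\om$ and $\ob$ in $\Om$ (compose $\tilde\tau$ with the M\"obius map inducing $(1\;\infty)(\om\;\ob)$; the composite acts as $(1\;\infty)$). What you actually need, and what is true, is that no element of that coset acts as the \emph{identity} on $\Om$: its image in $\Perm(\Om)$ is an element of $A_4$ composed with the odd permutation $(\om\;\ob)$, hence is odd and in particular nontrivial. Second, the equianharmonic stabiliser bound of $12$ should be applied in $PGL_2$ of an algebraically closed field containing $A$ (or fibrewise over geometric points), noting that the four sections remain pairwise disjoint because their differences are units of $A$; granting that, your two explicit generators (which are exactly the paper's $\bt_1$ and $\bt_2$) do generate the full $A_4$, and $A_4$ together with a transposition generates $S_4$, so surjectivity is fine. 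Note also that your parenthetical ``enumerating the twelve explicit generators'' overstates what you wrote down: you gave two generators, not twelve elements, though the group they generate does have order twelve.
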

\begin{proof}
 It is well-known that the ring $\Z[\om]/(1+\om+\om^2)$ is a principal
 ideal domain (because the definition $|a+b\om|=\sqrt{a^2-ab+b^2}$
 gives a Euclidean valuation), and that its group of units is cyclic
 of order $6$, generated by $-\om$.  It follows by standard arguments
 that $A$, $A[\nu]$ and $B$ are unique factorisation domains, and that
 $B^\tm/A^\tm$ is freely generated by $\{\nu-1,\nu-\om,\nu-\ob\}$. 

 Now let $V$ be the set of discrete valuations on $B$ that are trivial
 on $A$, in other words the surjective homomorphisms
 $v\:B^\tm\xra{}\Z$ such that
 \begin{itemize}
  \item[(a)] $v(f)=0$ for $f\in A$
  \item[(b)] $v(f+g)\geq\min(v(f),v(g))$ whenever $f,g,f+g\in B^\tm$. 
 \end{itemize}
 An arbitrary element $f\in B^\tm$ can be written uniquely in the form
 \[ f = a (\nu-1)^{n_1} (\nu-\om)^{n_\om} (\nu-\ob)^{n_\ob} \]
 with $a\in A^\tm$.  We define maps $v_\al\:B^\tm\xra{}\Z$ for
 $\al\in\Om$ by
 \[ v_\al(f) = \begin{cases}
     n_\al & \text{ if } \al\in\{1,\om,\ob\} \\
     -\deg(f)=-n_1-n_\om-n_\ob & \text{ if } \al=\infty.
    \end{cases}
 \]
 One can check directly that these maps are elements of $V$; we next
 claim that there are no more elements.  To see this, suppose that
 $w\in V$, and put $m_\al=w(\nu-\al)$ for $\al=1,\om,\ob$.  

 Suppose that $f,g\in B^\tm$ and $a:=f-g\in A^\tm$, so that
 $w(\pm a)=0$.  We then have $0=w(a)=w(f+(-g))\geq\min(w(f),w(g))$, so
 at least one of $w(f)$ and $w(g)$ must be nonpositive.  We also have
 $w(g)=w(f-a)\geq\min(w(f),0)$, which means that we cannot have
 $w(f)\geq 0>w(g)$, and similarly, we cannot have $w(g)\geq 0>w(f)$.
 Thus, if either of $w(f)$ or $w(g)$ is strictly negative, then both
 are.  If both are strictly negative, we can use the inequality
 $w(g)\geq\min(w(f),0)$ again to see that $w(g)\geq w(f)$.  Similarly,
 we can use the inequality $w(f)=w(g+a)\geq\min(w(g),0)$ to see that
 $w(f)\geq w(g)$, so $w(f)=w(g)$.

 Next note that $(1-\om)^2(1-\ob)^2(\om-\ob)^2=-27$, which shows that
 $\{1-\om,1-\ob,\om-\ob\}\subset A^\tm$.  Thus, the difference between
 any two of $\{\nu-1,\nu-\om,\nu-\ob\}$ lies in $A^\tm$, so we can
 apply the last paragraph.  This shows that at least two of
 $\{m_1,m_\om,m_\ob\}$ must be nonpositive, and if any of them are
 strictly negative then they are all equal.  In the latter case, the
 fact that $w\:B^\tm\xra{}\Z$ is surjective implies that
 $m_1=m_\om=m_\ob=-1$, so $w=v_\infty$.  If none of
 $\{m_1,m_\om,m_\ob\}$ is strictly negative then two of them must be
 zero and (by surjectivity) the third must be one, so $w$ is one of
 $\{v_1,v_\om,v_\ob\}$.

 It is clear that $\Aut(S)$ acts on $B=\CO_S$.  The action preserves
 the integral closure of $\Z[\third]$ in $B$, which is easily seen to
 be $A$, and it follows that $\Aut(S)$ acts on
 $V=\{v_\al\st\al\in\Om\}$, and thus on $\Om$.  More precisely, for
 any $\bt\in\Aut(S)$ we have an automorphism $\bt^*\:B\xra{}B$, and
 there is a unique permutation $\pi(\bt)\in\Perm(\Om)$ such that
 $v_\al(\bt^*(f))=v_{\pi(\bt)(\al)}(f)$ for all $\al\in\Om$ and
 $f\in\CO_S^\tm$.  This gives a homomorphism
 $\pi\:\Aut(S)\xra{}\Perm(\Om)$. 

 Next, we define automorphisms $\bt_0,\bt_1,\bt_2$ as shown in the
 table below.  It is straightforward to verify that the formulae given
 do indeed give automorphisms of $\CO_S$ and thus of $S$, and that the
 corresponding permutations are as listed.
 \[
  \renewcommand{\arraycolsep}{3em}
  \begin{array}{lll}
   \bt_0^*(\om)=\ob & \bt_0^*(\nu)=\nu
    & \pi(\bt_0)=(\om\;\ob) \\
   \bt_1^*(\om)=\om & \bt_1^*(\nu)=\om\nu
    & \pi(\bt_1)=(1\;\om\;\ob) \\
   \bt_2^*(\om)=\om & \bt_2^*(\nu)=(\nu+2)/(\nu-1) 
    & \pi(\bt_2)=(1\;\infty)(\om\;\ob)
  \end{array}
 \]
 It is not hard to check that these permutations generate
 $\Perm(\Om)$, so our map $\pi\:\Aut(S)\xra{}\Perm(\Om)$ is
 surjective. 

 Finally, suppose we have $\bt\in\Aut(S)$ with $\pi(\bt)=1$; we need
 to show that $\bt=1$.  It is clear that there must exist elements
 $u_1,u_\om,u_\ob\in A^\tm$ such that $\bt^*(\nu-\al)=u_\al(\nu-\al)$
 for all $\al\in\{1,\om,\ob\}$.  The case $\al=1$ gives
 $\bt^*(\nu)=u_1\nu+1-u_1$, and feeding this into the case $\al=\om$
 gives 
 \[ u_\om\nu-u_\om\om = 
   \bt^*(\nu-\om) = u_1\nu + 1 - u_1 - \bt^*(\om).
 \]
 On the other hand, $\bt^*(\om)$ is a primitive cube root of $1$ in
 $B$, and one checks that this gives $\bt^*(\om)\in\{\om,\ob\}$.  In
 particular, we have $\bt^*(\om)\in A$ so we can compare coefficients
 of $\nu$ to get $u_\om=u_1$ and 
 \[ \bt^*(\om) = 1 - u_1 + u_\om \om = 1 - u_1 + u_1\om . \]
 A similar argument gives $u_\ob=u_1$ and $\bt(\ob)=1-u_1+u_1\ob$.
 By multiplying these two equations together and simplifying we get  
 \[ 1=\bt^*(\om\ob)=(1-u_1+u_1\om)(1-u_1+u_1\ob)=1-3u_1+3u_1^2,  \]
 so $u_1=u_1^2$.  As $u_1\in A^\tm$ this gives $u_1=1$.  The above
 formulae then give $\bt^*(\nu)=\nu$ and $\bt^*(\om)=\om$,  so
 $\bt=1$ as required.
\end{proof}

\section{The level structure}

Define a function $\phi\:\F_3^2\xra{}\Map(S,\proj^2)$ as follows.
\begin{align*}
 \phi( 0, 0) &= [0:1:0]                                         \\
 \phi( 1, 0) &= [0:0:1]                                         \\
 \phi(-1, 0) &= [0:1-\nu^3:1]                                   \\
 \phi( 0, 1) &= [-(\nu-\ob)(\nu-\om):(\nu-\ob)^2(\nu-\om):1]    \\
 \phi( 1, 1) &= [-(\nu-  1)(\nu-\ob):(\nu-  1)^2(\nu-\ob):1]    \\
 \phi(-1, 1) &= [-(\nu-\om)(\nu-  1):(\nu-\om)^2(\nu-  1):1]    \\
 \phi( 0,-1) &= [-(\nu-\om)(\nu-\ob):(\nu-\om)^2(\nu-\ob):1]    \\
 \phi( 1,-1) &= [-(\nu-  1)(\nu-\om):(\nu-  1)^2(\nu-\om):1]    \\
 \phi(-1,-1) &= [-(\nu-\ob)(\nu-  1):(\nu-\ob)^2(\nu-  1):1]
\end{align*}
More compactly, when $l\neq 0$ we have 
\[ \phi(k,l) = [-(\nu-\om^a)  (\nu-\om^b):
                 (\nu-\om^a)^2(\nu-\om^b): 1],
\]
where $a=(k-1)l$ and $b=(k+1)l$.

\begin{proposition}
 The map $\phi$ actually lands in the group
 $\Gm(S,C)\subset\Map(S,\proj^2)$, and it is a homomorphism.
 Moreover, if $a\in\F_3^2\sm\{0\}$ then the locus where $\phi(a)=0$
 is the empty subscheme of $S$.
\end{proposition}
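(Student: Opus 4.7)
The proposition has three parts: each $\phi(a)$ defines a section of $C$ (not merely of $\proj^2$); the nonzero $\phi(a)$ never meet the identity section; and $\phi$ respects addition. The first two parts are nearly mechanical, while the third is the substantive one, so I would tackle them in that order.

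For the first part I would substitute each of the nine coordinate triples into the Weierstrass equation $y^2 + 3\nu xy + (\nu^3-1)y = x^3$ and check it holds identically in $B$. The three $l=0$ cases are one-line checks. For the six generic cases, setting $q = \nu-\om^a$, $r = \nu-\om^b$, $(x,y,z) = (-qr,\,q^2r,\,1)$ and factoring out the unit $q^2 r$ reduces the Weierstrass identity to
\[ qr(q+r-3\nu) + (\nu^3-1) = 0. \]
In terms of $s = \om^a + \om^b$ and $p = \om^{a+b}$ this collapses to the two scalar identities $p = s^2$ and $sp = -1$; since $a=(k-1)l$ and $b=(k+1)l$ give, in each of the six cases, a pair $\{\om^a,\om^b\}$ of distinct cube roots of unity, an easy case check confirms them. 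For the second part, every nonzero $\phi(a)$ is presented with third coordinate $1$, a unit in $B$, so it cannot agree with $\phi(0,0)=[0:1:0]$ on any point of $S$, and the vanishing subscheme is empty.

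For the homomorphism property I would use the chord-and-tangent description of the group law on a Weierstrass cubic with origin at the flex $[0:1:0]$: three points of $C$ sum to $O$ if and only if they are collinear in $\proj^2$, with appropriate multiplicities. The plan has three ingredients: (i) verify $\phi(-a) = -\phi(a)$ for each nonzero $a$ using the negation formula $-[x:y:z] = [x:\,-y - a_1 x - a_3 z\,:\,z]$ (here $a_1 = 3\nu$, $a_3 = \nu^3-1$); (ii) verify that $\phi(1,0)$ and $\phi(0,1)$ are $3$-torsion, most conveniently by showing each is a flex of $C$ (triple contact with its tangent line); (iii) verify two key collinearities, namely $\{\phi(1,0), \phi(0,1), \phi(-1,-1)\}$ and $\{\phi(-1,0), \phi(0,1), \phi(1,-1)\}$, yielding $\phi(1,0)+\phi(0,1) = \phi(1,1)$ and $\phi(-1,0)+\phi(0,1) = \phi(-1,1)$. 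Step (ii) ensures that $\psi(k,l) := k\phi(1,0)+l\phi(0,1)$ is a well-defined homomorphism $\F_3^2 \to C(S)$, and steps (i) and (iii) then force $\psi(k,l) = \phi(k,l)$ for every $(k,l)$ in $\F_3^2$; the line through $\phi(1,0)=[0:0:1]$ and $\phi(0,1)$ has a particularly simple equation, which makes the first collinearity short, and the second is similar.

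The main computational burden lies in the flex and collinearity verifications. Each collinearity reduces to checking that a $3 \times 3$ determinant in $B$ vanishes, and each flex check to showing that a tangent line meets $C$ with intersection multiplicity three at the point of tangency; both are routine but tedious. Visible symmetries among the formulas, in particular the cyclic permutation of the three factors $\nu-1$, $\nu-\om$, $\nu-\ob$ and the swap $\om \leftrightarrow \ob$, should reduce the number of essentially independent cases considerably.
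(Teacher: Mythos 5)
Your argument is correct, but it organizes the work quite differently from the paper. The paper's proof is deliberately uniform and computer\nobreakdash-friendly: it verifies that \emph{every} nonzero $\phi(a)$ is an inflexion point (hence killed by $3$), that all $36$ pairs of sections are everywhere disjoint (the $2\times 2$ minors generate the unit ideal of $B$), and that \emph{every} triple of distinct elements summing to zero gives a vanishing $3\times 3$ determinant; the identity $\phi(-a)=-\phi(a)$ and additivity are then deduced formally from the triples $(a,0,-a)$ and $(a,b,-a-b)$. You instead exploit the fact that $(1,0)$ and $(0,1)$ generate $\F_3^2$: two flex checks make $\psi(k,l):=k\phi(1,0)+l\phi(0,1)$ a well-defined homomorphism, and four applications of the negation formula together with two collinearities identify $\psi$ with $\phi$ on all nine elements. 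This is a genuinely lighter computation (and your reduction of the nine on-curve verifications to the symmetric-function identities $p=s^2$ and $sp=-1$ is a real improvement over ``check it by computer''), at the cost of breaking the symmetry of the table; the paper's version has the merit that every check is the same kind of check.

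One step you should make explicit: the vanishing of the $3\times 3$ determinant only shows that the three sections lie on a common line in each fibre. To conclude $P+Q+R=O$ you also need the three sections to be pairwise disjoint over all of $S$, since at a geometric point where two of them coincided the line would have to be tangent there for the group-law relation to follow. This is exactly why the paper verifies disjointness of all pairs before invoking collinearity. In your arrangement only the six pairs occurring in your two triples need this, and each is immediate --- for instance $\phi(0,1)$ and $\phi(-1,-1)$ have $x$-coordinates differing by the unit $(\nu-\ob)(\om-1)$, and $\phi(1,0)=[0:0:1]$ differs from the others because their $x$-coordinates are units --- but the verification should be recorded rather than left implicit.
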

\begin{proof}
 We shall show how to reduce this to a direct calculation in $B$,
 which can be carried out by computer.  The amount of calculation
 required can be reduced by more careful arguments, but we leave the
 details to the reader.  First, we write $O=[0:1:0]$, which is the
 zero element for the usual group structure on $C$, and note that
 $\phi(0)=O$ as required.  Next, we note that for all
 $a\in\F_3^2\sm\{0\}$, the $z$-coordinate of $\phi(a)$ is
 invertible, so we can regard $\phi(a)$ as a section of the affine
 curve $C'=C\cap\aff^2\subset C$ via the usual correspondence
 $[x:y:z]\leftrightarrows(x/z,y/z)$.  Note that $C'$ is defined by the
 vanishing of the function
 \[ f(x,y) := y^2 + (\nu^3 - 1) y + 3 \nu x y - x^3. \]
 We next claim that $\phi(a)$ is actually a section of order three.
 By well-known arguments, it is enough to show that $\phi(a)$
 is an inflexion point, or equivalently that 
 \[ f(\phi(a) + t(1,\mu(a))) = 0 \pmod{t^3} \]
 where $\mu(a)$ is the slope of the curve at $\phi(a)$.  By standard
 formulae, the slope of the curve at a point $(x,y)$ is given by
 $3(\nu y-x^2)/(1-\nu^3-3\nu x-2y)$, and this gives the following
 values for $\mu(k,l)$.
 \begin{align*}
  \mu( 1, 0) &= 0                \\
  \mu(-1, 0) &= -3\nu            \\
  \mu( 0, 1) &= (\ob-1)(\nu-\ob) \\
  \mu( 1, 1) &= (\ob-1)(\nu-  1) \\
  \mu(-1, 1) &= (\ob-1)(\nu-\om) \\
  \mu( 0,-1) &= (\om-1)(\nu-\om) \\
  \mu( 1,-1) &= (\om-1)(\nu-  1) \\
  \mu(-1,-1) &= (\om-1)(\nu-\ob)
 \end{align*}
 More compactly, when $l\neq 0$ we have
 $\mu(k,l)=(\om^{-l}-1)(\nu-\om^{(k-1)l})$.

 We next claim that whenever $a\neq b$, the section $\phi(a)$ is
 nowhere equal to $\phi(b)$.  It is equivalent to say that the
 determinants of the $2\tm 2$ minors of the matrix $(\phi(a),\phi(b))$
 generate the unit ideal in $B$, which can be checked by direct
 calculation.

 Now suppose we have three distinct points $a,b,c\in\F_3^2$
 such that $a+b+c=0$.  Further direct calculations show that in each
 case the determinant of the $3\tm 3$ matrix
 $(\phi(a),\phi(b),\phi(c))$ is zero.  As the sections $\phi(a)$,
 $\phi(b)$ and $\phi(c)$ are everywhere distinct, it follows that
 $\phi(a)+\phi(b)+\phi(c)=0$.  Now consider instead the case where
 $a+b+c=0$ but $a$, $b$ and $c$ are not distinct.  If $a=b$ then
 $c=-2a=a$ so $a=b=c$, and similarly in all other cases.  Thus
 $\phi(a)+\phi(b)+\phi(c)=3\phi(a)=0$ by our earlier argument.  Thus
 $\phi(a)+\phi(b)+\phi(c)=0$ in all cases where $a+b+c=0$.

 By applying this to the cases of the form $(a,b,c)=(a,0,-a)$ we see
 that $\phi(-a)=-\phi(a)$ for all $a$.  Moreover, for all $a,b$ we
 have $a+b+(-a-b)=0$ so 
 \[ \phi(a)+\phi(b)-\phi(a+b) = \phi(a)+\phi(b)+\phi(-a-b) = 0,
 \]
 proving that $\phi$ is a homomorphism.
\end{proof}

\section{Automorphisms of $C$}

Let $\Aut(C,S)$ denote the group of pairs $(\al,\bt)$, where $\bt$ is
an automorphism of $S$ and $\al$ is an isomorphism $C\xra{}\bt^*C$ of
elliptic curves over $S$.  Equivalently, $\al$ is a map $C\xra{}C$
such that
\begin{itemize}
 \item[(a)] The following square is a pullback (and in particular is
  commutative): 
  \begin{diag}
   \node{C} \arrow{e,t}{\al} \arrow{s} \node{C} \arrow{s} \\
   \node{S} \arrow{e,b}{\bt}           \node{S}
  \end{diag}
 \item[(b)] For each point $s$ of $S$, the map
  $\al_s\:C_s\xra{}C_{\bt(s)}$ is a group homomorphism.
\end{itemize}

More concretely, the map $\bt$ corresponds to a ring automorphism
$\bt^*\:B\xra{}B$.  We write $\om'=\bt^*(\om)$, $\nu'=\bt^*(\nu)$,
$a'_k=\bt^*(a_k)$ and so on.  The map $\al$ extends canonically to an
automorphism of $S\tm\proj^2$ given by a matrix of the form 
\[ A = A(u,r,s,t) =
    \left(\begin{array}{ccc}
     u^2   & 0   & r    \\
     s u^2 & u^3 & t    \\
     0     & 0   & 1
    \end{array}\right),
\]
such that 
\begin{align*}
 u   a_1 &= a'_1 + 2 s \\
 u^2 a_2 &= a'_2 - s a'_1 + 3 r - s^2 \\
 u^3 a_3 &= a'_3 + r a'_1 + 2 t \\
 u^4 a_4 &= a'_4 - s a'_3 + 2 r a'_2 - s r a'_1 -
            t a'_1 + 3 r^2 - 2 s t \\
 u^6 a_6 &= a'_6 + r  a'_4 - t  a'_3 + r^2  a'_2 -
            t r a'_1 + r^3 - t^2.
\end{align*}
In fact, $\Aut(C,S)$ bijects with the set of pairs $(\bt,A)$ as above,
with composition given by
\[ (\bt_1,A_1)(\bt_0,A_0)=(\bt_1\bt_0,\bt_0^*(A_1)A_0). \]

\begin{proposition}
 There is a short exact sequence
 \[ \{\pm 1\} \xra{} \Aut(C,S) \xra{} \Aut(S). \]
\end{proposition}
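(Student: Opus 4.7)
My plan is to show separately that $\{\pm 1\}$ embeds in the kernel, that the kernel equals $\{\pm 1\}$, and that the map $\pi\colon (\al,\bt) \mapsto \bt$ is surjective. The homomorphism $\pi\colon \Aut(C,S) \to \Aut(S)$ is manifest from the definition. The inversion $-1$ on $C$ corresponds to the data $\bt = 1$ together with $(u,r,s,t) = (-1, 0, -3\nu, 1-\nu^3)$; a direct verification of the five transformation equations with $a'_i = a_i$ shows that these values are valid and nontrivial, exhibiting $\{\pm 1\}$ as a subgroup of $\ker(\pi)$.

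For the reverse inclusion, I would take any $(1, A(u,r,s,t)) \in \ker(\pi)$. Manipulation of the five equations (or the Formulaire) gives the standard identities $u^4 c_4 = c_4$ and $u^{12}\Dl = \Dl$ in $B$. Since $B$ is an integral domain, $\Dl$ is a unit, and $c_4 = 9\nu(\nu^3+8) \neq 0$, this forces $u^4 = 1$ and hence $u^2 \in \{\pm 1\}$. To exclude $u^2 = -1$, I would reduce modulo $\nu$: the quotient $B/(\nu)$ is isomorphic to $\Z[\third,\om] \subset \Q(\om)$, and $-1$ is not a square in $\Q(\om)$ since $[\Q(\om,\sqrt{-1}):\Q(\om)] = 2$. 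Hence $u = \pm 1$, and substituting back into the five equations uniquely determines $(r,s,t) = (0,0,0)$ when $u = 1$ (the identity) and $(r,s,t) = (0,-3\nu,1-\nu^3)$ when $u = -1$ (the inversion).

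Surjectivity reduces, via Proposition~\ref{prop-Aut-S}, to lifting the three generators $\bt_0, \bt_1, \bt_2$ of $\Perm(\Om)$. The first two lifts are immediate: $\bt_0$ fixes $a_1$ and $a_3$, so it lifts via $(u,r,s,t) = (1,0,0,0)$; and $\bt_1$ satisfies $\bt_1^* a_1 = \om a_1$ and $\bt_1^* a_3 = a_3$ (using $\om^3 = 1$), so it lifts via $(\om, 0, 0, 0)$.

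The main obstacle is lifting $\bt_2$, for which $\bt_2^*\nu = (\nu+2)/(\nu-1)$ changes $a_1$ and $a_3$ in a less manageable way. Matching $u^4 c_4 = \bt_2^*(c_4)$ forces $u^2 = -3/(\nu-1)^2$ up to sign, and the identity $(\om-\ob)^2 = -3$ in $A$ suggests taking $u = (\om-\ob)/(\nu-1) \in B^\tm$. The equation $u a_1 = a_1' + 2s$ then gives $2s = 6(\om\nu-1)/(\nu-1)$ (using the identity $\om - \ob - 1 = 2\om$), so $s \in B$; the analogous equations determine $r$ and $t$ in $B$; and the last two equations are verified by direct computation. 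With all three generators lifted, $\pi$ is surjective, completing the short exact sequence.
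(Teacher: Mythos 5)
Your proof is correct, and it differs from the paper in one substantive way: the exactness in the middle. The paper disposes of this step by citing the Formulaire for the fact that an elliptic curve with $j\notin\{0,1728\}$ has automorphism group $\{\pm 1\}$, and then noting that $j\:S\xra{}\aff^1$ is dominant. You instead argue directly inside the coordinate ring: from $u^4c_4=c_4$ in the integral domain $B$ with $c_4\neq 0$ you get $u^4=1$, and you rule out $u^2=-1$ by reducing modulo $\nu$ to $A\subset\Q(\om)$, where $-1$ is not a square; back-substitution then pins down $(r,s,t)$ and recovers exactly the identity and the inversion $A(-1,0,-a_1,-a_3)$. This is more self-contained and stays entirely within the explicit Weierstrass formalism the section has already set up, at the cost of a small amount of computation; the paper's route is shorter but leans on an external theorem and an unspoken density/separatedness argument to pass from the open locus $j\notin\{0,1728\}$ to all of $S$. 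For the embedding of $\{\pm 1\}$ and for surjectivity your argument is essentially the paper's: you lift the three generators $\bt_0,\bt_1,\bt_2$ of $\Aut(S)$, and your matrices for $\bt_0,\bt_1$ agree with the paper's $A_0,A_1$, while your lift of $\bt_2$ uses $u=(\om-\ob)/(\nu-1)$ rather than the paper's $(\ob-\om)/(\nu-1)$ --- a legitimate choice differing by composition with $-1$, which is why your $s=3(\om\nu-1)/(\nu-1)$ differs from the paper's $3\ob(\nu-\om)/(\nu-1)$. You do leave the verification of the $a_4$ and $a_6$ equations for $A_2$ as an asserted computation, but the paper gives no more detail there either.
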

\begin{proof}
 There is an evident homomorphism $\Aut(C,S)\xra{}\Aut(S)$, sending
 $(\bt,A)$ to $\bt$.  As $C$ is a group scheme over $S$ we have a map
 $-1\:C\xra{}C$ covering the identity map of $S$, which satisfies
 $(-1)^2=1$.  In terms of the description of $\Aut(C,S)$ given above,
 this is just the element $(1_S,A(-1,0,-a_1,-a_3))$.  This gives the
 first map in our sequence; it is clearly injective, and it is also
 clear that the composite $\{\pm 1\}\xra{}\Aut(S)$ is trivial.  We
 know from~\cite[where?]{de:cef} that away from the locus where
 $j\in\{0,1728\}$, elliptic curves have no automorphisms other than
 $\{\pm 1\}$.  The formula $j=27\nu^3(\nu^3+8)^3/(\nu^3-1)^3$ implies
 that the map $j\:S\xra{}\aff^1$ is dominant, and it follows that our
 sequence is exact in the middle.  To show that the right-hand map is
 surjective, we need only exhibit elements $(\bt_k,A_k)\in\Aut(C,S)$
 for $k=0,1,2$, where $\bt_k$ is as in the proof of
 Proposition~\ref{prop-Aut-S}.  The relevant matrices $A_k$ are as
 follows:
 \begin{align*}
  A_0 &= A(1,0,0,0) = I \\
  A_1 &= A(\om,0,0,0)   \\
  A_2 &= A\left(
           \frac{\ob-\om}{\nu-1} \;,\;
           3\frac{1-\nu^3}{(\nu-1)^3} \;,\;
           3\ob\frac{\nu-\om}{\nu-1} \;,\;
           3\frac{\nu^3-1}{(\nu-1)^4}((1-\om)+(1-\ob)\nu)
         \right).
 \end{align*}
\end{proof}

\section{The action of $GL_2(\F_3)$}

Recall the set $\Om$ and the isomorphism
$\pi\:\Aut(S)\xra{}\Perm(\Om)$ discussed earlier.  Define a bijection
$\xi\:\Om\xra{}P^1\F_3$ by
\begin{align*}
 \xi(1)      &= 0   \\
 \xi(\om)    &= 1   \\
 \xi(\ob)    &= -1  \\
 \xi(\infty) &= \infty,
\end{align*}
and let $\xi'$ denote the resulting isomorphism
$\Perm(\Om)\xra{}\Perm(P^1\F_3)$. 

\begin{proposition}
 There is a unique homomorphism $\gm\:\Aut(C,S)\xra{}GL_2(\F_3)$ such
 that for all $(\bt,\al)\in\Aut(C,S)$ and for all points $s$ of $S$,
 the following diagram commutes:
 \begin{diag}
  \node{\F_3^2} \arrow{s,l}{\phi_s} \arrow{e,t}{\gm(\bt,\al)}
  \node{\F_3^2} \arrow{s,r}{\phi_{\bt(s)}} \\
  \node{C_s} \arrow{e,b}{\al_s} \node{C_{\bt(s)}.}
 \end{diag}
 Moreover, this map $\gm$ is an isomorphism, and it makes the
 following diagram commute:
 \begin{diag}
  \node{\Aut(C,S)} \arrow[2]{e,t}{\gm} \arrow{s}
  \node[2]{GL_2(\F_3)} \arrow{s} \\
  \node{\Aut(S)} \arrow{e,b}{\pi}
  \node{\Perm(\Om)} \arrow{e,b}{\xi'}
  \node{\Perm(P^1\F_3).}
 \end{diag}
\end{proposition}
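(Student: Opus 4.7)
The plan is to first show that $\phi$ promotes to an isomorphism $\F_3^2\tm S\xra{\sim}C[3]$ of group schemes over $S$, from which every automorphism of $C/S$ will act canonically on $\F_3^2$. The ring $B=\CO_S=A[\nu,(\nu^3-1)^{-1}]$ is an integral domain (a localisation of the polynomial ring $A[\nu]$ over the domain $A$), so $S$ is connected. Since $\Dl$ is a unit in $B$ and $3$ is invertible on $S$, the scheme $C[3]$ is finite \'etale of constant rank $9$. The previous proposition exhibits nine sections $\phi(a)\:S\to C[3]$ that are pairwise nowhere equal, so they exhaust the $3$-torsion on every geometric fibre; connectedness of $S$ then upgrades this to the desired isomorphism of constant group schemes.

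Given $(\bt,\al)\in\Aut(C,S)$, the restriction of $\al$ to $C[3]$ is an $S$-isomorphism $C[3]\to\bt^*C[3]$; transporting both sides through $\phi$ yields an $S$-automorphism of $\F_3^2\tm S$, which by constancy of $\F_3^2$ is the graph of a unique element $\gm(\bt,\al)\in\Aut(\F_3^2)=GL_2(\F_3)$. This settles existence and uniqueness of $\gm$ at once, and the composition formula $(\bt_1,A_1)(\bt_0,A_0)=(\bt_1\bt_0,\bt_0^*(A_1)A_0)$ immediately forces $\gm$ to be a group homomorphism. Moreover $\gm(1_S,-1_C)=-I$, since negation on $C$ negates every $3$-torsion section.

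To establish commutativity of the large square in the statement, it suffices to check it on a generating set of $\Aut(C,S)$. Using the lifts $(\bt_k,A_k)$ from the preceding proposition, I would compute $\al_k\cdot\phi(1,0)$ and $\al_k\cdot\phi(0,1)$, identify them with $\bt_k^*\phi(a')$ via the formulae for $\phi$, and read off the matrices $\gm(\bt_k,A_k)\in GL_2(\F_3)$. The expected outcome is that these matrices act on $P^1\F_3$ as the permutations $(1\;{-1})$, $(0\;1\;{-1})$ and $(0\;\infty)(1\;{-1})$, matching $\xi'\pi(\bt_k)$ in each case. The main obstacle is $k=2$: the matrix $A_2$ carries $(\nu-1)$ in its denominators, and the equivariance check requires identities such as $\bt_2^*(\nu-\om)=(1-\om)(\nu-\ob)/(\nu-1)$ (and its two analogues) to clear denominators cleanly and reduce to the same pattern as for $k=0,1$. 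With the square verified, $\gm$ fits into a morphism of short exact sequences
\[
 1\to\{\pm 1\}\to\Aut(C,S)\to\Aut(S)\to 1
 \quad\text{and}\quad
 1\to\{\pm I\}\to GL_2(\F_3)\to\Perm(P^1\F_3)\to 1,
\]
whose outer vertical maps are both isomorphisms (the right-hand one by Proposition~\ref{prop-Aut-S}), so the five lemma forces $\gm$ itself to be an isomorphism.
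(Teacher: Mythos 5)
Your proposal is correct, and its first half takes a genuinely different route from the paper. The paper defines $\gm$ by fiat on the three lifts $(\bt_k,A_k)$ (computing the matrices by hand, with uniqueness coming from injectivity of $\phi_s$), and then must prove that these lifts generate $\Aut(C,S)$ before it can extend $\gm$ to a homomorphism; that generation claim in turn needs the small trick that $(\bt_2,A_2)^2$ must equal $(1,-I)$ because $\gm(\bt_2,A_2)^2=-I$. You instead get existence and uniqueness of $\gm(\bt,\al)$ for \emph{every} $(\bt,\al)$ at once, by observing that $\phi$ identifies $C[3]$ with the constant group scheme $\F_3^2\tm S$ over the connected base $S$ (nine pairwise disjoint sections of a finite \'etale scheme of rank nine), so that $\al|_{C[3]}$ transports to a necessarily constant automorphism of $\F_3^2$. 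This is cleaner and more conceptual; it decouples the definition of $\gm$ from the generation question, and when you later restrict to generators to verify the compatibility square you can simply throw $(1,-1_C)$ into the generating set (it acts trivially on both $\Om$ and $P^1\F_3$), bypassing the paper's trick entirely. What the paper's approach buys is that it stays entirely within explicit matrix computations and never needs the \'etale-descent fact about $C[3]$. The endgame is the same in both arguments: the compatibility square exhibits $\gm$ as the middle map of a morphism between the two short exact sequences with kernels of order $2$, and the five lemma (the paper's ``diagram chasing'') gives that $\gm$ is an isomorphism. Your stated values of $\xi'\pi(\bt_k)$ and of the permutations induced by the three matrices agree with the paper's, and the identity $\bt_2^*(\nu-\om)=(1-\om)(\nu-\ob)/(\nu-1)$ you propose for the $k=2$ check is correct.
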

\begin{proof}
 Because $\phi$ is injective, there is at most one map $\gm(\bt,\al)$
 making the first diagram commute for all $s$.  If we can show that
 $\gm(\bt_k,A_k)$ exists for $k=0,1,2$ it will follow easily that
 $\gm(\bt,\al)$ exists for all $(\bt,\al)$ and moreover that $\gm$ is
 a homomorphism.  In fact, we have
 \begin{align*}
  \gm(\bt_0,A_0) &= \left(\begin{array}{cc}
                       1 & 0 \\ 0 & -1
                    \end{array}\right) \\
  \gm(\bt_1,A_1) &= \left(\begin{array}{cc}
                       1 & 1 \\ 0 & 1
                    \end{array}\right) \\
  \gm(\bt_2,A_2) &= \left(\begin{array}{cc}
                       0 & -1\\ 1 & 0
                    \end{array}\right).
 \end{align*}
 We next claim that the elements $(\bt_k,A_k)$ for $k=0,1,2$ generate
 $\Aut(C,S)$.  The elements $\bt_k$ certainly generate
 $\Aut(S)\simeq\Perm(\Om)$, so we see using our short exact sequence
 that it suffices to prove that $(1,-I)$ lies in the subgroup
 generated by the elements $(\bt_k,A_k)$.  However, $\bt_2^2=1$ so
 $(\bt_2,A_2)^2$ is either $(1,I)$ or $(1,-I)$, and the former is
 excluded by the fact that $\gm(\bt_2,A_2)^2=-I$.  This proves the
 claim, and in view of this we need only check that the second diagram
 commutes when evaluated at $(bt_k,A_k)$.  This can be done directly.
 For example, for $k=0$ we have
 $\pi(\bt_0)=(\om\;\;\ob)=(\xi^{-1}(1)\;\;\xi^{-1}(-1))$, so
 $\xi'\pi(\bt_0)=(1\;\;-1)$.  On the other hand,
 $\gm(\bt_0,A_0)=\bsm 1&0\\0&-1\esm$, so the associated M\"obius
 transformation of $P^1\F_3=\F_3\cup\{\infty\}=\{0,1,-1,\infty\}$ is
 the map $z\mapsto -z$, or equivalently the permutation $(1\;\;-1)$,
 as before.  The cases $k=1$ and $k=2$ are similar.  

 It is well-known that the map $GL_2(\F_3)\xra{}\Perm(P^1\F_3)$ is
 surjective, and that the kernel is the group of order $2$ generated
 by $-I$.  We have seen that $-I$ lies in the image of $\gm$ and that
 the map 
 \[ \Aut(C,S)\xra{}\Aut(S)\xra{}\Perm(\Om)\xra{}\Perm(P^1\F_3) \]
 is surjective, with kernel of order $2$.  It follows by diagram
 chasing that $\gm$ is an isomorphism.
\end{proof}

\section{Special fibres}

Let $S'\subset S$ be the locus where $\nu=0$, and put $C'=C\tm_SS'$.
This is given by the equation $y(1-y)+x^3=0$, and in $\CO_{S'}$ we
have
\begin{align*}
 a_1 &= a_2=a_4=a_6=c_4= 0 \\
 a_3 &= -1 \\
 c_6 &= -216 = -2^3 3^3 \\
 \Dl &= -27 = -3^3 \\
 j   &= 0.
\end{align*}
The curve has complex multiplication by $\Z[\om]$, given by the
formulae
\begin{align*}
 \om.(x,y) &= (\om x,y) \\
 \ob.(x,y) &= (\ob x,y) \\
 -(x,y)    &= (x,1-y).
\end{align*}
The points $\phi(k,l)$ are as follows:
\[ \begin{array}{|c|ccc|}
   \hline
      &     -1      &    0   &     +1      \\ \hline
   -1 & (-\ob,-\om) &  (0,1) & (-\om,-\ob) \\ 
    0 & (- 1 ,-\om) & \infty & (- 1 ,-\ob) \\ 
   +1 & (-\om,-\om) &  (0,1) & (-\ob,-\ob) \\ \hline
   \end{array}
\]

Now let $S''\subset S'$ be the locus where $2=\nu=0$, so
$\CO_{S''}=\F_2[\om]/(1+\om+\om^2)=\F_4$.  Put $C''=C\tm_SS''$.  

\begin{proposition}
 The curve $C''$ is supersingular, in other words the associated
 formal group has height $2$.
\end{proposition}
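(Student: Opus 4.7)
The plan is to write $C''$ down explicitly and then apply a standard characteristic-$2$ criterion for supersingularity. Substituting $\nu=0$ and reducing modulo $2$ in the defining equation $y^2+(\nu^3-1)y+3\nu xy-x^3=0$ yields $y^2+y=x^3$ over $\CO_{S''}=\F_4$, so $C''$ is a Weierstrass curve with $a_1=0$, $a_3=1$, and $a_2=a_4=a_6=0$.

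For the height step I would expand the formal group $\widehat{C''}$ at $O=[0:1:0]$ in the standard parameter $t=-x/y$ and compute the $2$-series. A direct expansion of the Weierstrass formal group law gives
\[
 [2]_F(t)\equiv -a_1 t^2 \pmod{2,\,t^3},
\]
so the vanishing of $a_1$ on $C''$ forces $[2]_F(t)=O(t^3)$. Since the formal group of any elliptic curve has height at most $2$, this immediately gives height exactly $2$, which is the definition of supersingularity.

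As a cross-check, one may note that $j=0$ on $C''$ (inherited from the computation on $C'\supset C''$), and $j=0$ is the unique supersingular $j$-invariant in characteristic $2$; or one may simply count $C''(\F_2)=\{\infty,(0,0),(0,1)\}$, obtaining trace of Frobenius congruent to $0$ modulo $2$ and invoking the classical trace criterion. There is no real obstacle once the equation $y^2+y=x^3$ is in hand; the proof reduces to citing the correct standard fact, and no further calculation in the ring $B$ is required.
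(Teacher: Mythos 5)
Your proof is correct, but it takes a different route from the paper's. You reduce to the Weierstrass equation $y^2+y=x^3$ over $\F_4$ and observe that $a_1=0$, so the coefficient of $t^2$ in $[2]_F(t)$ (which mod $2$ is the Hasse invariant $a_1$) vanishes; you then invoke the general fact that the formal group of an elliptic curve has height at most $2$ to conclude the height is exactly $2$. The paper instead makes the $2$-series explicit: working in the chart $y=1$, the equation becomes $z-z^2=x^3$, which is solved formally as $z=\sum_{k\geq 0}x^{3\cdot 2^k}$; the inversion formula then gives $[-1](x)=x/(1+z)=x+x^4+O(x^5)$, so $[2](x)=x-_F[-1](x)$ is a unit multiple of $x-[-1](x)=x^4+O(x^5)$, exhibiting the leading term in degree $4=2^2$ directly and thereby proving height exactly $2$ without appealing to the bound on heights of elliptic formal groups. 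Your argument is shorter but imports two standard facts (the identification of the Hasse invariant with $a_1$ in characteristic $2$, and the height bound); the paper's is self-contained and as a bonus yields the closed form $[2](x)=\sum_{k\geq 0}x^{12\cdot 2^k-8}$ recorded in the subsequent remark. Your cross-checks ($j=0$ is the unique supersingular invariant in characteristic $2$; the point count of $y^2+y=x^3$ over $\F_2$ giving trace $0$) are also valid, with the minor caveat that $C''$ lives over $\F_4$, so the point-count argument applies to the $\F_2$-model and one then base-changes, which preserves supersingularity.
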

\begin{proof}
 We may work in the neighbourhood of $O$ where $y$ is invertible.  By
 putting $y=1$, we identify this with the affine scheme where
 $z-z^2=x^3$.  This gives $z\in\mxi_O^3$ and shows that $x$ is a
 formal parameter at $O$.  The equation $z-z^2=x^3$ gives
 \[ z=x^3+z^2=x^3+x^6+z^4=x^3+x^6+x^{12}+z^8=\ldots, \]
 and after completing at $\mxi_O$ we deduce that
 $z=\sum_{k\geq 0}x^{3.2^k}$.  Next, we recall the standard formula
 \[ -[x:y:z] = [x:-y-a_1x-a_3z:z]. \]
 In our context, this gives
 \[ -[x:1:z] = [x:z+1:z] = 
    \left[\frac{x}{1+z}:1:\frac{z}{1+z}\right].
 \]
 This means that $[-1](x)=x/(1+z)=x+x^4+O(x^5)$.  Moreover,
 $[2](x)=x-_F[-1](x)$ is a unit multiple of $x-[-1](x)=x^4+O(x^5)$,
 which proves that the height is $2$, as claimed.
\end{proof}
\begin{remark}
 One can in fact use standard duplication formulae and some
 rearrangement to show that
 \[ [2](x) = \frac{x^4}{1+z^4} = \sum_{k\geq 0}x^{12.2^k-8}. \]
\end{remark}

\section{Degeneration}

The curve $C$ can be extended in an obvious way over $\spec(\Z[\nu])$.

Over the locus where $\nu^3=1$, the curve is given by the equation
$y^2+3\nu xy=x^3$.  It is singular at the point $[0:0:1]$, and smooth
elsewhere.  If $3$ is invertible then the smooth locus is isomorphic
to $G_m$ by the map $u\mapsto[9u(u-1)\nu^2:27u:(u-1)^3]$.  The base is
the disjoint union of three pieces, where $\nu=1$, $\nu=\om$ and
$\nu=\ob$; for each of these pieces there is a subgroup $A<\F_3^2$
such that $\phi$ maps $A$ to the smooth locus by a homomorphism, and
carries the complement of $A$ to the singular point.  For example,
on the piece where $\nu=1$ we have $A=0\tm\F_3$.

On the other hand, over the locus where $\nu^3-1=3=0$ the curve is
just the cuspidal cubic $y^2=x^3$, and the smooth locus is isomorphic
to $G_a$ by the map $t\mapsto[t:1:t^3]$.  The map $\phi$ lands in (an
infinitesimal neighbourhood of) the singular locus.

\section{Landweber exactness}

\begin{proposition}
 The elliptic curve $C$ is Landweber exact.
\end{proposition}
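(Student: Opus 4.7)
The plan is to verify the hypotheses of Landweber's exact functor theorem directly, prime by prime. Since $3$ is invertible in $B$ and $B$ is free as a $\Z[\third]$-module, only primes $p\neq 3$ need be considered, and for each such $p$ multiplication by $p$ is automatically injective on $B$. The formal group of an elliptic curve has height at most two, so at each prime $p\neq 3$ the remaining Landweber conditions reduce to showing that $v_1$ is a non-zerodivisor on $B/p$ and that $v_2$ is a unit on $B/(p,v_1)$.

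The case $p=2$ is where the previous section enters. Under the standard identification of $v_1\pmod{2}$ with the mod-$2$ Hasse invariant (which for a Weierstrass curve in characteristic $2$ equals $a_1$), we have $v_1 \equiv 3\nu \equiv \nu$ in $B/2 = \F_4[\nu,(\nu^3-1)^{-1}]$. This is a non-zerodivisor since the ring is an integral domain. The further quotient $B/(2,v_1)$ is precisely $\CO_{S''}=\F_4$, and the preceding proposition says that $C''$ has height two, which is equivalent to $v_2$ being a unit in $\CO_{S''}$.

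For $p\geq 5$ the argument is more generic. The ring $B/p$ is a product of (one or two) one-dimensional integral domains, according as $p\equiv 2$ or $p\equiv 1\pmod{3}$. On each component of $\spec(B/p)$ the explicit formula $j=27\nu^3(\nu^3+8)^3/(\nu^3-1)^3$ exhibits $j$ as a non-constant rational function of $\nu$, so $j$ takes infinitely many values on each component; since only finitely many $\overline{\F}_p$-values of $j$ correspond to supersingular curves, $v_1$ is nonzero on each component and therefore a non-zerodivisor on $B/p$. The further quotient $B/(p,v_1)$ is zero-dimensional and finitely generated over $\F_p$, hence Artinian, and each of its closed points carries a supersingular elliptic curve, so $v_2$ acts as a unit at every closed point; Nakayama then gives that $v_2$ is a unit on all of $B/(p,v_1)$.

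The only substantial geometric input is the height-two computation for $C''$ in the preceding proposition. The step I expect to require the most care is the identification at $p=2$ of $v_1\pmod 2$ with $a_1$, since $v_1$ is only well-defined modulo decomposables in $MU_*$; one has to check that some acceptable choice of $v_1$ reduces to $a_1$ (or at least that $(2,v_1)=(2,a_1)$ as ideals), which follows from the standard analysis of $[2]_F(x)$ for a Weierstrass formal group in characteristic $2$.
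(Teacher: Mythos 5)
Your proof is correct, and its core for $p\ge 5$ --- that $B/p$ is a (product of) integral domain(s), that $j$ is a nonconstant function of $\nu$, and that only finitely many $j$-values are supersingular, so the Hasse invariant is a non-zerodivisor --- is exactly the argument the paper uses. The differences are organisational. The paper first reduces from $C/S$ to $C_0/S_0$ using that $B$ is free of rank $2$ over $B_0$, so that $B_0/p=\F_p[\nu][(1-\nu^3)^{-1}]$ is a single domain for every $p\ne 3$, and then runs the $j$-invariant argument uniformly with no special case at $p=2$; it also stops after checking that $(p,v_1)$ is a regular sequence, leaving implicit the standard fact that an elliptic formal group has height at most $2$ everywhere, so that $v_2$ is automatically invertible modulo $(p,v_1)$ and the higher Landweber conditions are vacuous. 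You work with $B$ directly (harmless, at the cost of the ``one or two components'' bookkeeping), you spell out the $v_2$ step via the Artinian-ring argument (a genuine gain in completeness, since this is precisely the point the paper glosses over), and you treat $p=2$ separately through the supersingular fibre $C''$ and the identification of $v_1\bmod 2$ with $a_1=3\nu$. That last step is a pleasant consistency check with the ``Special fibres'' section but is not logically needed: $B/2=\F_4[\nu][(\nu^3-1)^{-1}]$ is again a domain and $j\equiv\nu^{12}/(\nu^3+1)^3$ is nonconstant there, so your generic argument already covers $p=2$. Your closing caveat about $v_1$ only being defined modulo decomposables is well taken; the clean way to phrase both your argument and the paper's is in terms of the invariant ideals $(p,v_1)$ and $(p,v_1,v_2)$, which is what you in effect do.
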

\begin{proof}
 Because $C=C_0\tm_{S_0}S$ and the ring $B:=\CO_S$ is free of rank $2$
 over $B_0=\CO_{S_0}$, it suffices to prove that $C_0$ is Landweber
 exact.  Equivalently, for all primes $p$ we need to check that $p$ is
 not a zero-divisor in $B_0$, and that the Hasse invariant is not a
 zero-divisor in $B_0/p$.  When $p=3$ we have $B_0/p=0$, so everything
 is trivial. For other primes we have
 $B_0/p=\F_p[\nu][(1-\nu^3)^{-1}]$ which is an integral domain, so we
 need only show that the Hasse invariant is nontrivial.  We have
 $j=27\nu^3(\nu^3+8)^3/(\nu^3-1)^3$, which shows that the map
 $j\:S_0\xra{}\aff^1$ is nonconstant and thus dominant.  There are
 only finitely many supersingular $j$-invariants, so the Hasse
 invariant must be nontrivial as required.
\end{proof}

\begin{corollary}
 There is an essentially unique elliptic spectrum $E$ attached to $C$,
 and it has a compatible action of $G$.
\end{corollary}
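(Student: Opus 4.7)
The plan is to combine the preceding Landweber-exactness result with the standard Landweber exact functor theorem (LEFT), and then transport the $G$-action on $(C,S)$ through that construction by functoriality.

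First I would apply LEFT. After choosing a coordinate at the origin $O$, the formal group of $C$ makes $B_* := B[u^{\pm 1}]$ (with $|u|=2$) into an $MU_*$-algebra. The previous proposition ensures that $X \mapsto MU_*(X) \otimes_{MU_*} B_*$ is a homology theory, and Brown representability produces a representing spectrum $E$ with $\pi_* E = B_*$. Essential uniqueness is the standard clause of LEFT: any two spectra representing this theory are equivalent, since the homology theory is determined up to natural isomorphism by the pair $(B, \text{formal group of }C)$.

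Next I would produce the $G$-action. Using the isomorphism $\gm\:\Aut(C,S)\xra{}G$ from the previous section, each element $g\in G$ corresponds to a pair $(\bt,A)$, and $A$ records in particular how $\al$ acts on the coordinate at $O$ via the unit $u$ in $A(u,r,s,t)$. This gives an automorphism of the pair $(B_*, \text{formal group of }C)$, and functoriality of LEFT in this input data yields a self-equivalence $g_*\:E\xra{}E$. The composition rule $(\bt_1,A_1)(\bt_0,A_0)=(\bt_1\bt_0,\bt_0^*(A_1)A_0)$ stated earlier then guarantees that $g\mapsto g_*$ is a group homomorphism into $\pi_0\Aut(E)$, compatible by construction with the geometric action on $C$.

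The main obstacle is not any single difficult step but rather being clear about what kind of action one has in fact constructed. What drops out is an action of $G$ on $E$ in the stable homotopy category, which is exactly what the corollary claims. Promoting it to a rigid $E_\infty$-action is the Hopkins-Miller-Goerss problem alluded to in the introduction, and is genuinely harder, but it is performed after $K(2)$-localization on $\hE$ rather than on $E$, so it lies outside the scope of this corollary.
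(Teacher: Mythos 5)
Your proposal is correct and is essentially an unpacking of the paper's one-line proof, which simply invokes the equivalence between the category of Landweber exact elliptic spectra and the category of Landweber exact elliptic curves; your appeal to ``functoriality of LEFT'' is exactly that equivalence (its nontrivial content being the absence of phantom maps between Landweber exact spectra, which is what lets the action of $\Aut(C,S)\cong G$ descend to an honest group action in the homotopy category rather than one defined only up to indeterminacy). Your closing remark correctly locates the rigidification problem outside the scope of the corollary, matching the paper's framing in the introduction.
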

\begin{proof}
 The category of Landweber exact elliptic spectra is equivalent to the
 category of Landweber exact elliptic curves.
\end{proof}

\section{The Weil pairing}

There is a pairing $e_n\:C[n]\tm C[n]\xra{}\mu_n$, defined as follows.
Given $P,Q\in C[n]$ we can find rational functions $g,h$ on $C$ such
that $\dv(g)=n[P]-n[O]$ and $\dv(h)=n[Q]-n[O]$.  After multiplying $g$
by a suitable scalar, we can assume that $g/h$ converges to $1$ at
$O$.  We define $e_n(P,Q)=(-1)^ng(Q)/h(P)$.  

\begin{proposition}
 $e_3(\phi(1,0),\phi(0,1))=\om$.
\end{proposition}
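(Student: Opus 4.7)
The plan is to produce explicit rational functions $g,h$ on $C$ with $\dv(g)=3[P]-3[O]$ and $\dv(h)=3[Q]-3[O]$ for $P=\phi(1,0)$ and $Q=\phi(0,1)$, and then evaluate the definition directly.  The key observation is that a nonzero $3$-torsion section of $C$ is necessarily a flex (from $3R=O$ we get $-2R=R$, forcing the tangent line at $R$ to meet $C$ with multiplicity $3$ there), so the tangent line $\ell_R$ at such an $R$ has $\dv(\ell_R)=3[R]-3[O]$.  Since $P$ and $Q$ are nonzero $3$-torsion sections, we may take $g=\ell_P$ and $h=\ell_Q$.

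Using the tables from the previous section, $P=(0,0)$ has slope $\mu(1,0)=0$, so $g=y$; at $Q=(x_Q,y_Q)$ with slope $\mu(Q)=(\ob-1)(\nu-\ob)$ we set $h=y-\mu(Q)(x-x_Q)-y_Q$.  I then verify the normalisation $g/h\to 1$ at $O=[0:1:0]$ using the local parameter $t=x/y$: solving the Weierstrass equation for $z/y$ in terms of $x/y$ gives expansions of the form $x=\pm t^{-2}(1+O(t))$ and $y=\pm t^{-3}(1+O(t))$, so any affine combination $y-\alpha x-\beta$ has a pole of order exactly $3$ at $O$ with leading coefficient independent of $\alpha,\beta$.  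Hence both $g$ and $h$ are automatically normalised.

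Finally, $g(Q)=y_Q=(\nu-\ob)^2(\nu-\om)$, while $h(P)=h(0,0)=\mu(Q)x_Q-y_Q$, which after substituting $x_Q=-(\nu-\ob)(\nu-\om)$ simplifies to $-\ob\,y_Q$.  Therefore
\[ e_3(P,Q)=(-1)^3\,\frac{g(Q)}{h(P)}=-\frac{y_Q}{-\ob\,y_Q}=\ob^{-1}=\om. \]
The only step that is not mechanical substitution is the normalisation check at $O$; once that is established, the verification reduces to evaluating explicit tangent lines at known points.
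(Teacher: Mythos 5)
Your proposal is correct and follows essentially the same route as the paper: both take $g=y$ and $h$ the (non-vertical) tangent line at $Q$, note that any function of the form $y-\alpha x-\beta$ has a triple pole at $O$ with the same leading term so that $g/h\to 1$ there, and then evaluate $-g(Q)/h(P)$; indeed your $h$ coincides with the paper's $w/z$ after expanding $\mu(Q)x_Q-y_Q=-\ob\,y_Q$. The only cosmetic difference is that you justify $\dv(\ell_R)=3[R]-3[O]$ by the flex argument $-2R=R$, whereas the paper verifies it by substituting the line into $f$ and obtaining a perfect cube.
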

\begin{proof}
 Put 
 \begin{align*}
  P &= \phi(1,0) = [0:0:1] \\
  Q &= \phi(0,1) = [-(\nu-\ob)(\nu-\om):(\nu-\ob)^2(\nu-\om):1]. 
 \end{align*}
 Define
 \[ w = y + (1 - \ob)(\nu - \ob) x + (1 + \om)(\nu-\om)(\nu-\ob)^2 z \]
 and $g=y/z$, $h=w/z$.  As $x/y$ and $z/y$ converge to $0$ at
 $O=[0:1:0]$, we see that $h/g$ converges to $1$ at $O$.  I claim that
 $\dv(g)=3[P]-3[0]$ and $\dv(h)=3[Q]-3[0]$.  Assuming this, we have 
 \[ e_3(P,Q)= -\frac{g(Q)}{h(P)} =
    -\frac{(\nu-\ob)^2(\nu-\om)}{(1 + \om)(\nu-\om)(\nu-\ob)^2} =
    \frac{-1}{-\ob}=\om,
 \]
 as claimed.

 We now check that $\dv(g)=3[P]-3[O]$ and $\dv(h)=3[Q]-3[O]$.  First,
 it is well-known that $g$ has a pole of order $3$ at $O$, and $h$ is
 asymptotic to $g$ there so it also has a triple pole.  It is also
 clear that neither $g$ nor $h$ has any other poles.  In the finite
 part of the curve, we have $g=y$ and $f=y^2+(\nu^3-1)y+3\nu xy-x^3$.
 Thus the locus where $f=g=0$ is defined by the ideal $(y,x^3)$, which
 gives the point $P$ with multiplicity $3$.  Thus $\dv(g)=3[P]-3[O]$.

 Similarly, in the finite part of the curve we have
 $h=y+(1-\ob)(\nu-\ob)x+(1+\om)(\nu-\om)(\nu-\ob)^2$, so $h=0$ iff
 $y=-(1-\ob)(\nu-\ob)x-(1+\om)(\nu-\om)(\nu-\ob)^2$.  If we substitute
 this into $f$ we get $-(x+(\nu-\om)(\nu-\ob))^3$, which proves that
 $\dv(h)=3[Q]-3[O]$.
\end{proof}

\section{Universality}

Let $T$ be a base where $3$ is invertible, let $D$ be an elliptic
curve over $T$ with origin $O$, and let $\psi\:\F_3^2\xra{}\Gm(T,D)$
be a level three structure.  In other words, $\psi$ is a homomorphism
such that for all $a\in\F_3^2$ with $a\neq 0$, the locus where
$\psi(a)=0$ is empty.  

\begin{theorem}\label{thm-universal}
 There is a unique pullback square of the following type that carries
 $\psi$ to $\phi$:
 \begin{diag}
  \node{D} \arrow{e,t}{\al} \arrow{s} \node{C} \arrow{s} \\
  \node{T} \arrow{e,b}{\bt}           \node{S.}
 \end{diag}
 More precisely, the maps $\phi$ and $\psi$ are adjoint to maps
 $\phi^\#\:\F_3^2\tm S\xra{}C$ and $\psi^\#\:\F_3^2\tm T\xra{}D$, and
 the condition is that $\phi^\#\circ(1\tm\bt)=\al\circ\psi^\#$.
\end{theorem}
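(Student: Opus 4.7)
My plan is to prove uniqueness first, using the identification $\gm\:\Aut(C,S)\to GL_2(\F_3)$ from the previous section, and then to build $(\bt,\al)$ Zariski-locally on $T$ by normalizing a Weierstrass equation of $D$ with the help of $\psi$.

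For uniqueness, I will take two candidate solutions $(\bt_1,\al_1)$ and $(\bt_2,\al_2)$.  Their composition $\al_2\al_1^{-1}$ (interpreted over the common base $T$) defines an element of $\Aut(C,S)$ covering an automorphism of $S$ and acting trivially on $\phi$.  Because $\gm$ is a bijection that intertwines this action with the tautological action of $GL_2(\F_3)$ on $\F_3^2$, only the identity can act trivially, so $(\bt_1,\al_1)=(\bt_2,\al_2)$.  A useful corollary is that locally constructed solutions automatically patch, reducing existence to a Zariski-local problem.

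For existence, I will work on an affine open $T=\spec(R)$ and choose a Weierstrass embedding of $D\subset\proj^2_R$.  Using a change of variables of the form $A(u,r,s,t)$, I will move $\psi(1,0)$ to $[0:0:1]$ and rotate the tangent line to $D$ at that point into $\{y=0\}$; the fact that $\psi(1,0)$ is 3-torsion (the inflection-point calculation already used for $\phi$ in Section 4) will force $a_2=a_4=a_6=0$, giving $y^2+a_1xy+a_3y=x^3$ with $a_3$ and $\Dl=a_3^3(a_1^3-27a_3)$ units.  The Weil-pairing proposition of the previous section forces $\om':=\bt^*\om=e_3(\psi(1,0),\psi(0,1))$.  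Writing $(x_Q,y_Q)$ for $\psi(0,1)$ and noting that the quantities
\[ N\,:=\,\frac{a_1^3}{a_1^3-27a_3}, \qquad M\,:=\,\frac{a_1 y_Q}{-3x_Q} \]
are invariant under the residual rescaling $(x,y)\mapsto(u^2x,u^3y)$ and on $C$ compute to $\nu^3$ and $\nu(\nu-\ob)$ respectively, I will extract $\nu':=\bt^*\nu$ as the unique solution of the system $\nu'^3=N$, $\nu'^2-\ob'\nu'=M$.  Algebraic elimination gives the explicit formula
\[ \nu'\,=\,\frac{N-\ob' M}{M+\om'}, \]
valid on the open locus where $M+\om'$ is a unit; analogous formulas built from $\psi(\pm 1,\pm 1)$ will cover the three complementary loci (corresponding on $C$ to $\nu\in\{-1,-\om,-\ob\}$).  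The pair $(\om',\nu')$ then defines $\bt\:T\to S$, the Weierstrass equations of $D$ and $\bt^*C$ coincide by construction, and $\al$ is the evident identification; the remaining seven identities $\al\circ\psi^\#(a)=\phi^\#(a)\circ\bt$ follow since both $\psi$ and $\phi$ are homomorphisms on $\F_3^2$.

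The main obstacle will be verifying that the various local formulas for $\nu'$ patch to a single global section of $\CO_T$ and that the resulting $\nu'$ has $(\nu')^3-1$ a unit, so that it actually defines a map into $S_0\subset S$.  The direct verification reduces to identities in $B$ from Section 2, but as noted above the uniqueness argument offers a cleaner path: the locally constructed $(\bt,\al)$ automatically agree on overlaps and hence descend to a global solution.
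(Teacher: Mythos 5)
Your overall strategy (normalise a Weierstrass equation locally using $\psi(1,0)$ to reach $y^2+a_1xy+a_3y=x^3$, read off $\om'$ from the Weil pairing, and extract $\nu'$ from invariants of the equation) is the paper's strategy, and the first normalisation step is exactly Lemma~\ref{lem-normal-param}. But there are two genuine gaps. First, the uniqueness argument does not work as stated. Given two solutions, $\al_1$ is an isomorphism $D\to\bt_1^*C$, not an automorphism of $C$, so ``$\al_2\al_1^{-1}$'' is an isomorphism $\bt_1^*C\to\bt_2^*C$ of elliptic curves \emph{over $T$} carrying $\bt_1^*\phi$ to $\bt_2^*\phi$; it is not an element of $\Aut(C,S)$ and does not cover an automorphism of $S$. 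Deducing $\bt_1=\bt_2$ from the existence of such an isomorphism is precisely the injectivity half of the representability statement you are proving; the triviality of $\ker(\gm)$ only controls isomorphisms lying over automorphisms of $S$ itself. Since your patching of local solutions leans on this uniqueness, existence inherits the gap. The paper avoids all of this by making the construction canonical: the normalised parametrisation is \emph{uniquely} determined by $(D,\psi)$, so $\om=e_3(P,Q)$ and $\nu=a_1/3$ are forced, and uniqueness and glueing come for free.

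Second, the invariance claim underlying your formula for $\nu'$ is false. Under the residual rescaling that replaces $(x,y)$ by $(u^2x,u^3y)$ one has $a_1\mapsto ua_1$, $a_3\mapsto u^3a_3$, $x_Q\mapsto u^2x_Q$, $y_Q\mapsto u^3y_Q$, so $N$ is indeed invariant but $M=a_1y_Q/(-3x_Q)\mapsto u^2M$. Thus $M$ depends on the unspecified unit $u$, and the system $(\nu')^3=N$, $(\nu')^2-\ob'\nu'=M$ is not well posed: for a generic choice of the initial parametrisation it is simply inconsistent (none of the three cube roots of $N$ satisfies the second equation), and it never canonically selects one cube root. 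Resolving the cube-root ambiguity in $\nu'$ --- which corresponds to the $\bt_1$-orbit $\nu\mapsto\om\nu$ on $S$ and is the genuinely nontrivial point --- requires first killing the residual $u$-freedom. This is what the paper does: it rescales so that the weight-one quantity $\lm=\mu(Q+P)^{-1}-\mu(Q-P)^{-1}$ takes a fixed value, and only then sets $\nu=a_1/3$. Some such rigidification must be inserted before any non-invariant quantity like $M$ can be used to pin down $\nu'$.
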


The rest of this section constitutes the proof.  

By a \emph{Weierstrass parametrisation} of $D$ we mean a pair of
functions $(x,y)$ on $D\sm\{O\}$ with poles of orders $2$ and $3$ at
$O$, such that $x^3/y^2$ tends to $1$ at $O$.  It is well-known that
such parametrisations exist locally on $T$, and are unique up to an
affine transformation of the form $x\mapsto u^2x+r$, 
$y\mapsto u^3y+su^2x+t$ with $u,r,s,t\in\CO_T$.  It is also well-known
that for any Weierstrass parametrisation, there are unique elements
$a_1,\ldots,a_6\in\CO_T$ such that
$y^2+a_1xy+a_3y=x^3+a_2x^2+a_4x+a_6$, and that the map
$(x,y)\:D\sm\{O\}\xra{}\aff^2\tm T$ gives an isomorphism of
$D\sm\{O\}$ with the locus where this equation is satisfied.

\begin{lemma}\label{lem-normal-param}
 Let $P$ be a section of $D[3]\sm\{O\}$.  Then locally on $T$ we can
 choose a Weierstrass parametrisation such that $x(P)=y(P)=0$ and
 $dy=0$ at $P$.  Moreover, these functions satisfy a unique equation
 of the form $y^2+a_1xy+a_3y=x^3$.
\end{lemma}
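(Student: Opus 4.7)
The plan is to start with an arbitrary local Weierstrass parametrization $(x_0,y_0)$ of $D$, which exists by the theory just recalled, and then use the four-parameter family of Weierstrass transformations $(u,r,s,t)$ to impose the three conditions at $P$. First I would apply the transformation with $u=1$, $s=0$, $r=-x_0(P)$, $t=-y_0(P)$, producing coordinates $(x_1,y_1)$ with $x_1(P)=y_1(P)=0$; the resulting Weierstrass equation then automatically has $a_6=0$.

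To make the tangent at $P$ horizontal, I first note that the coefficient $a_3$ in this equation must be a unit in $\CO_T$, after shrinking $T$ if necessary. Otherwise, in some fibre the tangent line at $(0,0)$ to $y^2+a_1xy+a_3y=x^3+a_2x^2+a_4x$ would be the vertical line $x=0$, which meets the curve there with multiplicity only $2$ at $P$; but $3P=O$ forces $P$ to be an inflection, a contradiction. I can then apply the transformation $u=1$, $r=t=0$, $s=-a_4/a_3$ to kill $a_4$. The resulting equation has the form $y^2+a_1xy+a_3y=x^3+a_2x^2$, and its tangent direction at $P$ is now $y=0$, so $dy=0$ at $P$ as required.

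Finally, I would use $3P=O$ to show $a_2=0$ by a direct group-law computation. The tangent line $y=0$ meets the curve at $P$ (with multiplicity $2$) and at one further section $R$; solving $x^2(x+a_2)=0$ gives $R=(-a_2,0)$, so the collinearity relation on the cubic reads $2P+R=O$, i.e., $2P=-R$. Applying the standard negation formula $-(x,y)=(x,-y-a_1x-a_3)$ yields $2P=(-a_2,\,a_1a_2-a_3)$. On the other hand, $3P=O$ is equivalent to $2P=-P=(0,-a_3)$, so comparing first coordinates forces $a_2=0$. Uniqueness of the equation is then automatic, since any Weierstrass parametrization uniquely determines its coefficients, and the conditions we imposed force $a_2=a_4=a_6=0$.

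The main obstacle I anticipate is finding a concrete algebraic formulation of the $3$-torsion condition that works over an arbitrary, possibly non-reduced, base $T$. A divisor-theoretic/Abel--Jacobi approach (summing $\dv(y)=2(P)+(R)-3(O)$ in $\mathrm{Pic}^0(D/T)$) would in principle work but requires some care relative to $T$; the direct group-law calculation above, using only the explicit collinearity and negation formulae on the affine Weierstrass curve, sidesteps any such subtlety.
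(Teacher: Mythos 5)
Your proof is correct. Its first two thirds coincide with the paper's argument: translate $P$ to the origin (so $a_6=0$), observe that the order-three condition prevents the tangent line at $P$ from being vertical, and then shear $y$ by a multiple of $x$ to make the tangent line $y=0$ --- your $s=-a_4/a_3$ is exactly the slope $\al$ that the paper subtracts off, since the linear part of $f$ at the origin is $a_3y-a_4x$. One small point in your unit argument: at a geometric point where $a_3=0$ you should also dispose of the case $a_4=0$, where there is no tangent line at all; but that case is excluded immediately by smoothness of $D$ at $P$, so nothing is lost.

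Where you genuinely diverge is the final step, $a_2=0$. The paper invokes the inflection condition directly: the tangent line $y=0$ must meet the curve to order three at $P$, so $f(t,0)=-t^3-a_2t^2-a_4t-a_6$ must vanish mod $t^3$, forcing $a_2=a_4=a_6=0$ (the paper writes $f(0,t)$ at that point, but clearly means $f(t,0)$). You instead compute $2P$ by the chord--tangent law --- the line $y=0$ cuts out $x^2(x+a_2)=0$, so the intersection divisor is $2[P]+[R]$ with $R=(-a_2,0)$ and $2P=-R=(-a_2,a_1a_2-a_3)$ --- and compare with $2P=-P=(0,-a_3)$. The two arguments carry the same content, since ``the tangent line meets the curve in the divisor $3[P]$'' and ``$P+P+P=O$'' are equivalent via the group law; your version has the mild advantage that it quotes only the collinearity and negation formulae, which are unambiguous over an arbitrary (possibly non-reduced) base $T$, whereas the paper's version requires one to read ``inflection point'' as a scheme-theoretic multiplicity statement relative to $T$ --- precisely the subtlety you flag at the end. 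Both routes are sound.
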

\begin{proof}
 Choose an arbitrary Weierstrass parametrisation.  After adding
 constants to $x$ and $y$ we may assume that $x=y=0$ at $P$.  Suppose
 that the corresponding Weierstrass equation is $f(x,y)=0$, where
 \[ f(x,y) = y^2 + a_1 x y + a_3 y - x^3 - a_2 x^2 - a_4 x - a_6. \]
 As $P$ lies on the curve this must be satisfied when $x=y=0$, so
 $a_6=0$.  As $P$ has order three we know that $(x(P),y(P))=(0,0)$ is
 an inflection point of the curve.  The function $f(0,t)=t^2$ does not
 vanish mod $t^3$ (at any geometric point) so the line $x=0$ is not
 the tangent line, so $dx$ generates the cotangent space at $P$.  This
 means that $dy=\al dx$ for some $\al\in\CO_T$.  After replacing $y$ by
 $y-\al x$ (and adjusting $a_i$ accordingly) we find that $dy=0$.
 This means that the tangent line is $y=0$, and $(0,0)$ is an
 inflection point so $f(0,t)=0$ mod $t^3$.  Thus $a_2=a_4=a_6=0$ and
 $f=y^2+a_1xy+a_3y-x^3$.
\end{proof}

\begin{lemma}\label{lem-non-parallel}
 Let $P,Q$ be sections of $D[3]\sm\{O\}$ that are everywhere linearly
 independent over $\F_3$, and let $(x,y)$ be a Weierstrass
 parametrisation.  Then $(dy/dx)_Q-(dy/dx)_P$ is invertible.
\end{lemma}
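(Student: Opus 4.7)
The plan is to use Lemma~\ref{lem-normal-param} on $P$ to normalise coordinates. Since invertibility of a global section of $\CO_T$ can be checked pointwise, and Lemma~\ref{lem-normal-param} is local on $T$, we may work locally and choose a Weierstrass parametrisation with $P = (0,0)$, $(dy/dx)_P = 0$, and defining equation $y^2 + a_1 xy + a_3 y = x^3$. The claim then reduces to showing that $(dy/dx)_Q$ is invertible in $\CO_T$.

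Implicit differentiation of the defining equation gives
\[ (dy/dx)_Q = \frac{3 x_Q^2 - a_1 y_Q}{2 y_Q + a_1 x_Q + a_3}. \]
First I would check that the denominator is invertible: its vanishing at a geometric point $t$ of $T$ would make the tangent at $Q_t$ vertical, forcing $Q_t \in D_t[2]$; but $Q$ is a section of $D[3]\sm\{O\}$ and $D[2]\cap D[3] = \{O\}$, so this cannot happen. It therefore suffices to show that the numerator $3 x_Q^2 - a_1 y_Q$ is nonvanishing at every geometric point of $T$.

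Suppose toward a contradiction that $3 x_Q^2 - a_1 y_Q$ vanishes at some geometric point $t$. Then the tangent at the flex $Q_t$ is the horizontal line $y = y_Q$, and substituting this into the Weierstrass equation produces a cubic $x^3 - a_1 y_Q x - y_Q(y_Q + a_3) = 0$ which, since a flex tangent meets the curve with multiplicity three, must equal $(x - x_Q)^3$. Comparing coefficients of $x^2$ (and using that $3$ is invertible in $\CO_T$) forces $x_Q = 0$, and comparing constant terms then gives $y_Q(y_Q + a_3) = 0$. The case $y_Q = 0$ yields $Q_t = (0,0) = P_t$, while $y_Q = -a_3$ yields $Q_t = -P_t$ via the negation formula $-(x,y) = (x, -y - a_1 x - a_3)$; both alternatives contradict the everywhere linear independence of $P$ and $Q$ over $\F_3$.

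I expect the main obstacle to be the coefficient-matching step, where one must show that a horizontal flex tangent on the normalised cubic $y^2 + a_1 xy + a_3 y = x^3$ occurs only at $P$ or $-P$; once this geometric fact is established, translating pointwise nonvanishing into integral invertibility in $\CO_T$ is routine.
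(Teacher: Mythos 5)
Your proof is correct and follows essentially the same route as the paper: normalise via Lemma~\ref{lem-normal-param} so that $P=(0,0)$ with horizontal tangent and equation $y^2+a_1xy+a_3y=x^3$, then use the flex property of $Q$ together with the invertibility of $3$ to rule out a horizontal tangent at $Q$ (the paper does this by noting that the $t^2$-coefficient of $f(x_Q+t,y_Q)$ is $-3x_Q$ with $x_Q$ invertible, which is your coefficient-matching argument read in the contrapositive direction). Your explicit verification that the denominator $2y_Q+a_1x_Q+a_3$ is invertible, via $D[2]\cap D[3]=\{O\}$, addresses a point the paper leaves implicit.
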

\begin{proof}
 As any two parametrisations are related by an affine transformation,
 we may replace the given parametrisation by any other one without
 changing the statement.  Thus, by the previous lemma, we may assume
 that $x(P)=y(P)=(dy/dx)_P=0$, and that we have an equation of the
 form $f(x,y)=y^2+a_1xy+a_3y-x^3=0$.  We now need to show that
 $(dy/dx)_Q$ is invertible.  We will identify $D\sm\{O\}$ with its
 image under the map $(x,y)\:D\sm\{O\}\xra{}\aff^2\tm T$; let $(c,d)$
 be the point corresponding to $Q$.  It is standard that the line
 $x=0$ meets $D\sm\{O\}$ only at $\pm P$, and $Q$ is everywhere
 linearly independent of $P$ so $c$ is invertible.  Next, note that
 the coefficient of $t^2$ in $f(c+t,d)$ is $-3c$, so
 $f(c+t,d)\neq 0\pmod{t^3}$.  As $Q$ is an inflection point, this
 means that the tangent line at $Q$ cannot be horizontal, so
 $(dy/dx)_Q\neq 0$.  This holds at every geometric point, so
 $(dy/dx)_Q$ is invertible as claimed.
\end{proof}

\begin{proof}[Proof of Theorem~\ref{thm-universal}]
 We'll write $P=\psi(1,0)$ and $Q=\psi(0,1)$ and $\om=e_3(P,Q)$.  This
 satisfies $1+\om+\om^2=0$, by standard properties of the Weil
 pairing.  

 Now choose a Weierstrass parametrisation $(x,y)$ such that
 $x(P)=y(P)=(dy/dx)_P=0$.  For any $R\in D\sm\{O\}$ we write
 $\mu(R)=(dy/dx)_R$, so $\mu(P)=0$.  If $R\neq\pm P$,
 Lemma~\ref{lem-non-parallel} tells us that $\mu(R)$ is invertible.
 We may thus define $\lm=\mu(Q+P)^{-1}-\mu(Q-P)^{-1}$.  If we replace
 $x$ by $u^2x$ and $y$ by $u^3y$ then $\mu(R)$ becomes $u\mu(R)$ for
 all $R$, so $\lm$ becomes $\lm/u$.  By taking $u=\lm/3$ and
 performing this replacement, we may assume that $\lm=3$.  It is not
 hard to check that the resulting parametrisation $(x,y)$ is uniquely
 specified by these constraints.  We will identify $D\sm\{O\}$ with
 its image under the map $(x,y)\:D\sm\{O\}\xra{}\aff^2\tm T$, which as
 usual is defined by an equation $f(x,y)=0$ where
 $f(x,y)=y^2+a_1xy+a_3y-x^3$.  We define $\nu=a_1/3$.

 Remaining details are left to the reader.
\end{proof}

\end{document}